\newtheorem{theorem}{Theorem}[]
\newtheorem{definition}[theorem]{Definition}
 \newtheorem*{example}{Example}
\newtheorem{corollary}[theorem]{Corollary}
\begin{document}

%
%
%
%
%
%
%
%
%

\title[Solvability of Vekua-type periodic operators]
{Solvability of Vekua-type periodic operators and applications to classical equations}

\author[Kirilov]{Alexandre Kirilov}
\email{akirilov@ufpr.br}
\address{
	Departamento de Matem\'atica \br 
	Universidade Federal do Paran\'a \br 
	CP 19096, CEP 81531-990, Curitiba\br 
	Brasil}

\thanks{This study was financed in part by the Coordenação de Aperfeiçoamento de Pessoal de Nível Superior - Brasil (CAPES) - Finance Code 001. The first and second authors were supported in part by CNPq - Brasil (grants 316850/2021-7 and 423458/2021-3).}

\author[Moraes]{Wagner Augusto Almeida de Moraes}
\address{
	Departamento de Matem\'atica \br 
	Universidade Federal do Paran\'a \br 
	Caixa Postal 19096\\  CEP 81531-990, Curitiba, Paran\'a \br 
	Brasil}
\email{wagnermoraes@ufpr.br}

\author[Tokoro]{Pedro Meyer Tokoro}
\address{
	Programa de P\'os-Gradua\c c\~ao em Matem\'atica \br 
	Universidade Federal do Paran\'a \br 
	Caixa Postal 19096\\  CEP 81531-990, Curitiba, Paran\'a \br 
	Brasil}
\email{tokoro.p@gmail.com}

\subjclass{Primary 35B10 Secondary 35E20, 30G20}

\keywords{Vekua-type operators, Solvability, Global hypoellipticity, periodic solutions}

\date{today}

\begin{abstract}
In this note, we investigate Vekua-type periodic operators of the form $Pu=Lu-Au-B\bar u$, where $L$ is a constant coefficient partial differential operator. We provide a complete characterization of the necessary and sufficient conditions for the solvability and global hypoellipticity of $P$. As an application,  we provide a comprehensive characterization of Vekua-type operators associated with classical wave, heat, and Laplace equations.
\end{abstract}

\maketitle

	\section{Introduction}

In \cite{vekua}, the Georgian mathematician I. N. Vekua introduced the theory of generalized analytic functions, which are the solutions to equations of the form
\begin{equation*}\label{vekua_original}
	\partial_{\bar{z}}u + Au + B\bar{u} = F,
\end{equation*}
where $\partial_{\bar{z}} = (\partial_{x}+i\partial_{y})/2$ and the coefficients $A$ and $B$ belong to a suitable function space in the complex plane. This theory is closely related to the theory of holomorphic functions, assuming some regularity on the coefficients to apply the well-known similarity principle. Vekua also applied this theory to explore problems in the membrane theory of shells and their connections with the problem of infinitesimal bendings of surfaces. 

In \cite{kravchenko}, V. Kravchenko extended this theory by replacing the Cauchy-Riemann operator with more general complex-valued vector fields. This extension allowed the study of well-known equations from mathematical physics, such as the Schrödinger, Dirac, and Maxwell equations, among others. Furthermore, connections between second-order elliptic equations and Vekua-type equations were established.

The contributions of Vekua, Kravchenko, and several other authors have found wide applications, including studies in boundary value problems in elasticity theory, hydrodynamics, electric potential, mechanics, and more.

In this paper we investigate the solvability of the operator $P$ on the $n$-dimensional torus $\mathbb{T}^n \simeq \mathbb{R}^n / 2\pi\mathbb{Z}^n$, defined by
\begin{equation}\label{P}
	Pu=Lu-Au-B\bar u,
\end{equation}
where $A, B \in \mathbb{C}$, and $L:\mathcal{C}^\infty(\mathbb{T}^n)\to \mathcal{C}^\infty(\mathbb{T}^n)$ is a partial differential operator.

Our main contribution lies in establishing both necessary and sufficient conditions for the solvability of the operator $P$. Furthermore, we establish the equivalence between solvability and global hypoellipticity. In conclusion, we apply this result to study Vekua-type operators associated with classical operators, characterizing their solvability and global hypoellipticity.

Our approach is inspired by results in \cite{BDM} and \cite{AD}. Additional insights about solvability on the torus can be obtained in \cite{BCP,BDG2017_jfaa,BDGK2015_jpdo,hounie,Petr2011_tams} and related works. In the broader context of compact Lie groups, some initial results are presented in \cite{wagner,KMR2020_bsm,KMR2021_jfa}. Additional references regarding infinitesimal deformations of surfaces connected with solvability of Vekua-type operators include \cite{LeMe22_2,mez2,mez4}  and references therein.

\section{Necessary and sufficient conditions for solvability}

Consider the operator $P: \mathcal{C}^\infty(\mathbb{T}^n) \to \mathcal{C}^\infty(\mathbb{T}^n)$ defined by
\begin{equation}\label{P}
	Pu = Lu - Au - B\bar{u},
\end{equation}
where \(A, B \in \mathbb{C}\), and \(L\) is a differential operator of the form
$$Lu = \sum_{0<|\alpha|\leqslant m}c_\alpha\partial^\alpha,$$
where \(c_\alpha \in \mathbb{C}\), for all \(\alpha \in \mathbb{N}_0^n\) satisfying \(0 < |\alpha| \leqslant m\), with symbol 	
$$\sigma_L(\xi) = \sum_{0<|\alpha|\leqslant m}i^{|\alpha|}c_\alpha \xi^\alpha, \ \xi \in \mathbb{Z}^n.$$

By the continuity of \(P\), if \(u(x) = \sum_{\xi \in \mathbb{Z}^n} \widehat{u}(\xi)e^{i\xi\cdot x}\) is a distribution in \(\mathscr{D}'(\mathbb{T}^n)\) then
\begin{align*}
	Pu(x,t) = & \sum_{\xi\in\mathbb{Z}^n} (L-A)\left(\widehat{u}(\xi) e^{i\xi\cdot x}\right) - B\sum_{\xi\in\mathbb{Z}^n}\overline{\widehat{u}(\xi) e^{i\xi\cdot x}}\\
	=  & \sum_{\xi\in\mathbb{Z}^n} \left(\sigma_L(\xi)-A\right)\widehat{u}(\xi) e^{i\xi\cdot x} - B\sum_{\xi\in\mathbb{Z}^n}\overline{\widehat{u}(-\xi)}e^{-i\xi\cdot x}.
\end{align*}

Therefore, the Fourier coefficients of any solution of $Pu=f$ must satisfy the following equation:
$$(\sigma_L(\xi)-A)\widehat{u}(\xi) - B\overline{\widehat{u}(-\xi)} = \widehat{f}(\xi).$$

Taking the conjugate of the previous equation for $-\xi\in\mathbb{Z}^n$, we obtain
$$
\overline{\widehat{f}(-\xi)}  =  (\overline{\sigma_L(-\xi)}-\bar A)\overline{\widehat{u}(-\xi)}-\bar B\widehat{u}(\xi)
$$
which gives us, for each $\xi\in\mathbb{Z}^n$, the following linear system:
$$
\begin{cases}
	\ \ \left(\sigma_L(\xi)-A\right)\widehat{u}(\xi) \ - \ B\overline{\widehat{u}(-\xi)} \ \ = \  \widehat{f}(\xi)\\
	-\bar B\widehat{u}(\xi)+(\overline{\sigma_L(-\xi)}-\bar A) \overline{\widehat{u}(-\xi)} =  \overline{\widehat{f}(-\xi)}
\end{cases}
$$

Solving this system for $\widehat{u}(\xi)$, we obtain
\begin{equation}\label{Deltau} 
	\Delta_\xi \widehat{u}(\xi)=(\overline{\sigma_L(-\xi)}-\bar A) \widehat{f}(\xi)+B\overline{\widehat{f}(-\xi)},
\end{equation}
where the discriminant
\begin{equation}\label{Delta_xi} 
	\Delta_\xi=\left(\sigma_L(\xi)-A\right)\cdot ( \overline{\sigma_L(-\xi)}-\bar A)-|B|^2.
\end{equation}

Observe that $\overline{\Delta_\xi}=\Delta_{-\xi}$, for all $\xi\in\mathbb{Z}^n$. In particular, $\Delta_\xi=0$ if, and only if, $\Delta_{-\xi}=0$.

We consider the same notion of solvability used in \cite{BDM} and \cite{AD} and the usual notion of global hypoellipticity.

\begin{definition}
	We say that an operator $P:\mathcal{C}^\infty(\mathbb{T}^n)\to \mathcal{C}^\infty(\mathbb{T}^n)$ is solvable if exists a subspace $\mathcal{F}\subset \mathcal{C}^\infty(\mathbb{T}^n)$ of finite codimension such that, for all $f\in\mathcal{F}$, exists $u\in \mathcal{C}^\infty(\mathbb{T}^n)$ such that $Pu=f$. Also, $P$ is globally hypoelliptic if $u\in\mathcal{D}'(\mathbb{T}^n)$ and $Pu\in \mathcal{C}^\infty(\mathbb{T}^n)$ imply that $u\in \mathcal{C}^\infty(\mathbb{T}^n)$. 
\end{definition}

\begin{theorem}\label{DC}
	The operator \(P\) is solvable if and only if the following Diophantine condition holds: there exists \(\gamma > 0\) such that 
	\begin{equation}
	 	\|\xi\|\geqslant\gamma\ \Rightarrow\ |\Delta_\xi|\geqslant\|\xi\|^{-\gamma}. \tag{DC} \label{DC_condition}
	\end{equation}
\end{theorem}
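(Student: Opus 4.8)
The plan is to prove both implications by analyzing when one can recover $\widehat{u}(\xi)$ from $\widehat{f}(\xi)$ via formula \eqref{Deltau}, paying attention to the set $Z = \{\xi \in \mathbb{Z}^n : \Delta_\xi = 0\}$ where the linear system degenerates.

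For the \emph{sufficiency} direction, assume \eqref{DC_condition} holds. First I would observe that $Z$ is finite: the condition forces $\Delta_\xi \neq 0$ whenever $\|\xi\| \geqslant \gamma$, so $Z$ is contained in a bounded set and hence is finite. I would then take $\mathcal{F}$ to be the closed subspace of $f \in \mathcal{C}^\infty(\mathbb{T}^n)$ whose Fourier coefficients satisfy, for each $\xi \in Z$, the compatibility condition arising from the singular linear system (namely that $(\overline{\sigma_L(-\xi)}-\bar A)\widehat{f}(\xi) + B\overline{\widehat{f}(-\xi)} = 0$, together with the analogous condition for $-\xi$); this is a finite set of linear constraints, so $\mathcal{F}$ has finite codimension. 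For $f \in \mathcal{F}$, define $\widehat{u}(\xi) = \Delta_\xi^{-1}\big[(\overline{\sigma_L(-\xi)}-\bar A)\widehat{f}(\xi) + B\overline{\widehat{f}(-\xi)}\big]$ for $\xi \notin Z$ and $\widehat{u}(\xi) = 0$ for $\xi \in Z$. The key estimate is that $|\overline{\sigma_L(-\xi)} - \bar A| \leqslant C\|\xi\|^m$ grows polynomially while $|\Delta_\xi|^{-1} \leqslant \|\xi\|^\gamma$ for large $\xi$ by \eqref{DC_condition}; combined with the rapid decay of $\widehat{f}(\xi)$ and $\widehat{f}(-\xi)$, this shows $\widehat{u}$ decays rapidly, so $u \in \mathcal{C}^\infty(\mathbb{T}^n)$. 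One checks $Pu = f$ by verifying the Fourier coefficients satisfy the original system (this uses the compatibility conditions precisely on $Z$).

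For the \emph{necessity} direction, I would argue by contraposition: assume \eqref{DC_condition} fails, so there is a sequence $\xi_j$ with $\|\xi_j\| \to \infty$ and $0 < |\Delta_{\xi_j}| < \|\xi_j\|^{-j}$ (after discarding the finitely many $\xi_j$ with $\Delta_{\xi_j}=0$, which cannot occur infinitely often with $\|\xi_j\|\to\infty$ unless $Z$ is infinite — a case that must be handled separately and which itself obstructs solvability). Passing to a subsequence so the $\xi_j$ are distinct and, say, $\|\xi_{j+1}\| > \|\xi_j\|$, I would construct $f$ by prescribing $\widehat{f}(\xi_j)$ and $\widehat{f}(-\xi_j)$ to be moderate (e.g. comparable to $\|\xi_j\|^{-j/2}$, chosen so that the right-hand side of \eqref{Deltau} does not vanish) and all other coefficients zero; this $f$ lies in $\mathcal{C}^\infty(\mathbb{T}^n)$. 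Then any distributional solution would need $|\widehat{u}(\xi_j)| \gtrsim |\Delta_{\xi_j}|^{-1}\|\xi_j\|^{-j/2} \gtrsim \|\xi_j\|^{j/2}$, which grows faster than any polynomial, contradicting $u \in \mathscr{D}'(\mathbb{T}^n)$. Finally, to get that \emph{no} finite-codimensional $\mathcal{F}$ works, I would note that one can carry out this construction supported on any infinite subset of the bad frequencies, producing infinitely many linearly independent such $f$'s, which cannot all be excluded by a finite-codimensional subspace — so solvability fails.

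The main obstacle I anticipate is the bookkeeping around the singular set $Z$ and the conjugate-coupling between $\xi$ and $-\xi$: because the system couples $\widehat{u}(\xi)$ with $\overline{\widehat{u}(-\xi)}$, the compatibility conditions defining $\mathcal{F}$ and the non-vanishing choices in the necessity construction must be made consistently for the pair $\{\xi,-\xi\}$, using the identity $\overline{\Delta_\xi} = \Delta_{-\xi}$ noted above. Verifying that the constructed $u$ actually satisfies $Pu = f$ (and not just the decoupled scalar equation \eqref{Deltau}) is where this coupling must be checked carefully.
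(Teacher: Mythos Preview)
Your overall architecture matches the paper's: use \eqref{DC_condition} plus formula \eqref{Deltau} to build a smooth solution on a finite-codimensional subspace for sufficiency, and for necessity violate \eqref{DC_condition} along a sequence $\xi_\ell$, split into the cases $\Delta_{\xi_\ell}=0$ infinitely often versus $\Delta_{\xi_\ell}\neq 0$, and produce infinitely many independent bad $f$'s by varying the supporting set. Two points, however, need tightening.

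\textbf{Sufficiency.} Setting $\widehat{u}(\xi)=0$ for $\xi\in Z$ does not in general give $Pu=f$: at such $\xi$ (and at $-\xi$, which also lies in $Z$) the equation $(\sigma_L(\xi)-A)\widehat u(\xi)-B\overline{\widehat u(-\xi)}=\widehat f(\xi)$ becomes $0=\widehat f(\xi)$, which your compatibility condition does not force. Either impose the stronger (still finite) condition $\widehat f(\xi)=0$ on $Z$, or pick $\widehat u(\xi)$ on $Z$ to be an actual solution of the singular $2\times 2$ system (which exists precisely because the compatibility condition holds). The paper is somewhat casual here as well; the cleanest fix is the first one.

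\textbf{Necessity.} Your estimate $|\widehat u(\xi_j)|\gtrsim |\Delta_{\xi_j}|^{-1}\|\xi_j\|^{-j/2}$ presupposes that the numerator $(\overline{\sigma_L(-\xi_j)}-\bar A)\widehat f(\xi_j)+B\overline{\widehat f(-\xi_j)}$ is itself $\gtrsim\|\xi_j\|^{-j/2}$, not merely nonzero. When $B\neq 0$ this is easy to arrange (put the mass on $\widehat f(-\xi_j)$), but when $B=0$ the numerator is $(\overline{\sigma_L(-\xi_j)}-\bar A)\widehat f(\xi_j)$, and if it happens that $|\sigma_L(-\xi_j)-A|\lesssim\|\xi_j\|^{-j}$ while $|\sigma_L(\xi_j)-A|$ stays bounded, the numerator collapses and your blow-up fails. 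The paper resolves this by first deciding which factor of $\Delta_{\xi_\ell}=(\sigma_L(\xi_\ell)-A)(\overline{\sigma_L(-\xi_\ell)}-\bar A)$ is small and then placing $\widehat f$ at the corresponding frequency. More notably, in the $B\neq 0$ case the paper uses a different and cleaner device: it sets $\widehat f(\xi_\ell)=\Delta_{\xi_\ell}$ (and $\widehat f(-\xi_\ell)=0$), so that \eqref{Deltau} gives $\widehat u(\xi_\ell)=\overline{\sigma_L(-\xi_\ell)}-\bar A$ and $\widehat u(-\xi_\ell)=B$ exactly. This produces a $u\in\mathscr{D}'\setminus\mathcal{C}^\infty$ rather than a nonexistent $u$, and sidesteps any lower-bound bookkeeping. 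Either route works once these details are in place.
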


\begin{proof}
	
This demonstration follows the same procedure as the proof of Theorem $1$ given in \cite{AD}. Let us start by assuming that the condition \eqref{DC_condition} holds. In this context, the set $\Lambda = \{\xi \in \mathbb{Z}^n : \|\xi\| < \gamma\}$ is finite, and the discriminant $\Delta_\xi$ is non-zero for all $\xi \in \mathbb{Z}^n \setminus \Lambda$. 	Additionally, we have $\Delta_{\xi}^{-1} \leqslant \|\xi\|^\gamma,$ for every $\xi \in \mathbb{Z}^n \setminus \Lambda$.

If the Fourier coefficients of $f \in \mathcal{C}^\infty(\mathbb{T}^n)$ satisfy the compatibility condition 
\begin{equation}\label{comp_cond} 
	(\overline{\sigma_L(-\xi)}-\bar A)\widehat{f}(\xi) + B\overline{\widehat{f}(-\xi)} = 0, \ \xi \in \Lambda
\end{equation} 
By (\ref{Deltau}), we set
\begin{equation*} 
	\widehat{u}(\xi) = \dfrac{(\overline{\sigma_L(-\xi)}-\bar A)\widehat{f}(\xi) + B\overline{\widehat{f}(-\xi)}}{\Delta_\xi},
\end{equation*} 
and thus $u = \sum_{\xi \in \mathbb{Z}^n \setminus \Lambda} \widehat{u}(\xi) e^{i\xi\cdot x}$ is a solution of $Pu=f$. 

Note that the sequence $\{\widehat{f}(\xi)\}$ decays rapidly, implying the rapid decay of $\{\widehat{u}(\xi)\}$, and consequently, $u \in \mathcal{C}^\infty(\mathbb{T}^n)$.

Finally, given the finiteness of $\Lambda$, the number of compatibility conditions on $f \in \mathcal{C}^\infty(\mathbb{T}^n)$ ensuring the existence of a solution $u \in \mathcal{C}^\infty(\mathbb{T}^n)$ to $Pu=f$ is finite. Therefore, according to our definition, $P$ is solvable.

\medskip	
On the other hand, let us assume that \eqref{DC_condition} does not hold. Thus, for each $\ell\in\mathbb{N}$, there exists a $\xi_\ell\in\mathbb{Z}^n$ such that 
$$\|\xi_\ell\|\geqslant \ell \mbox{ and } |\Delta_{\xi_\ell}|<\|\xi_\ell\|^{-\ell}.$$

It is worth noting that $\|\xi_\ell\|\to\infty$, since $\|\xi_\ell\|\geqslant \ell$ for all $\ell\in\mathbb{N}$. Denoting $\xi_\ell=(\xi_{1\ell},\dots,\xi_{n\ell})$, and by passing to a subsequence if necessary, we may assume that there is a coordinate $j\in\{1,\dots,n\}$ such that $\xi_{j\ell}$ is non-negative and maintains the same sign, for all $\ell\in\mathbb{N}$.

Now, consider the set $\Omega=\{\xi_\ell\in\mathbb{Z}^n: \ell\in\mathbb{N}\}$. The previous choice of the sequence $\{\xi_\ell\}_{\ell\in\mathbb{N}}$ implies that, if $\xi\in\Omega$, then $-\xi\notin\Omega$. Particularly, for any $\Omega_0\subset\Omega$, if $\xi\in\Omega_0$, then $-\xi\notin\Omega_0$.
	
\medskip	
\paragraph{Case 1:} $\Delta_{\xi_\ell}=0$ for infinitely many indices $\ell\in\mathbb{N}$.

Upon considering a subsequence, we can assume that $\Delta_{\xi_\ell}=0$ for all $\ell\in\mathbb{N}$. Consequently, by (\ref{Deltau}), we have
\begin{equation}\label{comp_cond2}
	(\overline{\sigma_L(-\xi)}-\bar A) \widehat{f}(\xi)+B\overline{\widehat{f}(-\xi)}=0, \  \xi\in\Omega.
\end{equation}

In the scenario where $B\neq 0$, utilizing \eqref{Delta_xi}, we obtain
\begin{equation*} 
	(\sigma_L(\xi_\ell)-A)\cdot (\overline{\sigma_L(-\xi_\ell)}-\bar A)=|B|^2, \ \ell\in\mathbb{N}.
\end{equation*}

As $|B|^2\neq 0$, we can conclude that $\sigma_L(\xi_\ell)-A\neq 0$ and $\overline{\sigma_L(-\xi_\ell)} - \bar A\neq 0$ for all $\ell\in\mathbb{N}$. In this context, (\ref{comp_cond2}) implies the existence of infinitely many compatibility conditions for the Fourier coefficients of $f\in \mathcal{C}^\infty(\mathbb{T}^n)$ to satisfy $Pu=f$. Consequently, $P$ is not solvable according to our definition.

Now, let us consider the case where $B=0$. In this scenario, the Fourier coefficients of $u$ and $f$ must satisfy, for all $\xi\in\mathbb{Z}^n$,
\begin{equation*} 
	(\sigma_L(\xi)-A)\widehat{u}(\xi)=\widehat{f}(\xi)\quad\text{and}\quad (\overline{\sigma_L(-\xi)}-\bar A)\overline{\widehat{u}(-\xi)}=\overline{\widehat{f}(-\xi)}
\end{equation*} 

Since
\begin{equation*} 
	0=\Delta_{\xi_\ell}=[\sigma_L(\xi_\ell)-A]\cdot[\overline{\sigma_L(-\xi_\ell)}-\bar A], \ \ell\in\mathbb{N}
\end{equation*} 
it implies that, for each $\ell\in\mathbb{N}$, at least one of the following is satisfied:  $$\sigma_L(\xi_\ell)-A=0\quad\text{or}\quad\overline{\sigma_L(-\xi_\ell)}-\bar A=0,$$
which further implies that $\widehat{f}(\xi)=0$ or $\widehat{f}(-\xi)=0$, for infinitely many $\xi\in\mathbb{Z}^n$. 

Therefore, in this case, there exist infinitely many compatibility conditions for the Fourier coefficients of $f$ such that $Pu=f$ has a smooth solution. Consequently, $P$ is not solvable.

\medskip
\paragraph{Case 2:} $\Delta_{\xi_\ell}=0$ for a finite number of indices $\ell \in\mathbb{N}$.

By passing to a subsequence, we may assume $\Delta_{\xi_\ell}\neq 0$ for all $\ell\in\mathbb{N}$, i.e.,
\begin{equation}\label{Delta_xi_ell} 
	0<|\Delta_{\xi_\ell}|<\|\xi_\ell\|^{-\ell}, \ \ell\in\mathbb{N}.
\end{equation}

Assume $B\neq 0$ and consider an infinite subset $\Omega_0\subset\Omega$. In this case, 
\begin{equation*} 
	f(x)=\sum_{\xi\in\Omega_0}\Delta_\xi e^{i\xi\cdot x}
\end{equation*} 
defines a smooth function $f\in \mathcal{C}^\infty(\mathbb{T}^n)$, due to \eqref{Delta_xi_ell}.

If $u\in\mathscr{D}'(\mathbb{T}^n)$ is a solution of $Pu=f$, the projection of $u$ on the subspace of $\mathscr{D}'(\mathbb{T}^n)$ generated by the frequencies $\pm\Omega_0$ is
\begin{equation*} 
	v(x)=\sum_{\xi\in\Omega_0}(\overline{\sigma_L(-\xi)}-\bar{A})e^{i\xi\cdot x}+ \sum_{\xi\in\Omega_0} Be^{-i\xi\cdot x}.
\end{equation*}

In fact, if $\xi\in\Omega_0$, then $\widehat{f}(\xi)=\Delta_\xi$ and $\overline{\widehat{f}(-\xi)}=0$. It follows from (\ref{Deltau}) that
\begin{equation*} 
	\Delta_\xi \widehat{u}(\xi)=[\overline{\sigma_L(-\xi)}-\bar A]\widehat{f}(\xi) = [\overline{\sigma_L(-\xi)}-\bar A]\Delta_\xi\ \Rightarrow\ \widehat{u}(\xi) = \overline{\sigma_L(-\xi)}-\bar A.
\end{equation*} 

If $-\xi\in\Omega_0$, then $\widehat{f}(\xi)=0$ and $\overline{\widehat{f}(-\xi)}=\overline{\Delta_{-\xi}}=\Delta_\xi$. It follows from (\ref{Deltau}) that
\begin{equation*} 
	\Delta_{\xi}\widehat{u}(\xi)=B\Delta_{\xi} \ \Rightarrow \ \widehat{u}(\xi)=B. 
\end{equation*} 

Observe that $v\in\mathscr{D}'(\mathbb{T}^n)\setminus \mathcal{C}^\infty(\mathbb{T}^n)$, implying $u\notin \mathcal{C}^\infty(\mathbb{T}^n)$. Since this construction is valid for any infinite subset $\Omega_0\subset\Omega$, we obtain infinitely many linearly independent functions $f\in \mathcal{C}^\infty(\mathbb{T}^n)$ for which there are no corresponding $u\in \mathcal{C}^\infty(\mathbb{T}^n)$  solutions to $Pu=f$. Therefore, $P$ is not solvable.

\medskip
Now we assume $B=0$ and consider an infinite subset $\Omega_0\subset\Omega$. It follows from (\ref{Delta_xi_ell}) that
\begin{equation*} 
	|\Delta_{\xi_\ell}|=|\sigma_L(\xi_\ell)-A|\cdot|\sigma_L(-\xi_\ell)-A|\leqslant \|\xi_\ell\|^{-\ell}, \ \ell\in \mathbb{N}. 
\end{equation*} 

This implies that, 
$$|\sigma_L(\xi_\ell)-A|\leqslant \|\xi_\ell\|^{-\ell/2} \mbox{ or } |\sigma_L(-\xi_\ell)- A|\leqslant \|\xi_\ell\|^{-\ell/2}, \ \ell\in\mathbb{N}.$$ 

Let us suppose that $|\sigma_L(\xi_\ell)-A|\leqslant \|\xi_\ell\|^{-\ell/2}$ for infinitely many $\ell\in\mathbb{N}$. Passing to a subsequence, if necessary, we may assume that
\begin{equation}\label{decay_Beq0}
	|\sigma_L(\xi_\ell)-A|\leqslant \|\xi_\ell\|^{-\ell/2}, \ \ell\in\mathbb{N}.
\end{equation}

It follows from (\ref{decay_Beq0}) that
\begin{equation*}
	f(x)=\sum_{\xi\in\Omega_0}(\sigma_L(\xi)-A)e^{i\xi\cdot x}
\end{equation*}
defines a function $f\in \mathcal{C}^\infty(\mathbb{T}^n)$. 

If $u\in\mathscr{D}'(\mathbb{T}^n)$ is a solution of $Pu=f$, the projection of $u$ on the subspace of $\mathscr{D}'(\mathbb{T}^n)$ generated by the frequencies $\Omega_0$ is
\begin{equation*} 
	v(x)=\sum_{\xi\in\Omega_0}e^{i\xi\cdot x}.
\end{equation*} 

In fact, if $\xi\in\Omega_0$, (\ref{Deltau}) implies	
\begin{equation*} 
	\underbrace{[\sigma_L(\xi)-A]\cdot[\overline{\sigma_L(-\xi)}-\bar A]}_{=\Delta_\xi}\widehat{u}(\xi) =[\overline{\sigma_L(-\xi)}-\bar A]\cdot \underbrace{[\sigma_L(\xi)-A]}_{=\widehat{f}(\xi)}.
\end{equation*} 

Hence, $\widehat{u}(\xi)=1$, for any $\xi\in\Omega_0$. Thus $v\in\mathscr{D}'(\mathbb{T}^n)\setminus \mathcal{C}^\infty(\mathbb{T}^n)$, which implies $u\notin \mathcal{C}^\infty(\mathbb{T}^n)$.

If we have $|\sigma_L(-\xi_\ell)- A|\leqslant \|\xi_\ell\|^{-\ell/2}$ for all $\ell\in\mathbb{N}$ (passing to a subsequence), take $f$ as in the previous case with $\xi\in-\Omega_0$.

Again, observe that this construction is valid for all infinite subsets $\Omega_0\subset\Omega$. Then, we obtain infinitely many linearly independent functions $f\in \mathcal{C}^\infty(\mathbb{T}^n)$ such that there is no $u\in \mathcal{C}^\infty(\mathbb{T}^n)$ solution of $Pu=f$. Therefore, $P$ is not solvable.
\end{proof}

\begin{corollary}\label{GH}
	$P$ is solvable if, and only if, $P$ is globally hypoelliptic.
\end{corollary}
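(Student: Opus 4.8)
The plan is to deduce this from Theorem \ref{DC}. Since that theorem identifies solvability with the Diophantine condition \eqref{DC_condition}, it suffices to prove that \eqref{DC_condition} is equivalent to global hypoellipticity. First I would show that \eqref{DC_condition} implies global hypoellipticity. Let $u\in\mathscr{D}'(\mathbb{T}^n)$ with $Pu=f\in\mathcal{C}^\infty(\mathbb{T}^n)$. The identity \eqref{Deltau} was obtained purely from $Pu=f$ at the level of Fourier coefficients, hence is valid for this distributional $u$, and for $\|\xi\|\geqslant\gamma$ it yields
\[
|\widehat{u}(\xi)|\leqslant\|\xi\|^{\gamma}\bigl(\,|\overline{\sigma_L(-\xi)}-\bar A|\,|\widehat{f}(\xi)|+|B|\,|\widehat{f}(-\xi)|\,\bigr).
\]
Because $\sigma_L$ is a polynomial of degree at most $m$, the factor $|\overline{\sigma_L(-\xi)}-\bar A|$ grows at most polynomially in $\|\xi\|$, while $\{\widehat{f}(\xi)\}$ is rapidly decreasing; therefore $\{\widehat{u}(\xi)\}$ is rapidly decreasing and $u\in\mathcal{C}^\infty(\mathbb{T}^n)$, so $P$ is globally hypoelliptic.

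For the converse I would argue by contraposition, reusing the constructions in the second half of the proof of Theorem \ref{DC}: assume \eqref{DC_condition} fails, and take the sequence $\{\xi_\ell\}$ and the set $\Omega$ built there. In Case 2 of that proof, one of the distributions $v$ displayed there already lies in $\mathscr{D}'(\mathbb{T}^n)\setminus\mathcal{C}^\infty(\mathbb{T}^n)$ and satisfies $Pv=f$ with $f\in\mathcal{C}^\infty(\mathbb{T}^n)$, which is exactly a failure of global hypoellipticity. In Case 1, where $\Delta_{\xi_\ell}=0$ for infinitely many $\ell$, I would observe that the $2\times2$ matrix governing the Fourier system at $\xi_\ell$, namely
\[
M_\ell=\begin{pmatrix}\sigma_L(\xi_\ell)-A & -B\\ -\bar B & \overline{\sigma_L(-\xi_\ell)}-\bar A\end{pmatrix},
\]
has determinant $\Delta_{\xi_\ell}=0$ and hence a nonzero kernel vector $(a_\ell,b_\ell)$, which I normalize so that $\max\{|a_\ell|,|b_\ell|\}=1$. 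After passing to a subsequence so that the frequencies $\pm\xi_\ell$ are pairwise distinct (the sign condition on the coordinate $\xi_{j\ell}$ recorded in the proof of Theorem \ref{DC} already guarantees $\xi_\ell\neq-\xi_{\ell'}$), the distribution $u=\sum_\ell\bigl(a_\ell e^{i\xi_\ell\cdot x}+\overline{b_\ell}\,e^{-i\xi_\ell\cdot x}\bigr)$ has bounded, non-decaying Fourier coefficients, so $u\in\mathscr{D}'(\mathbb{T}^n)\setminus\mathcal{C}^\infty(\mathbb{T}^n)$, while a direct check using the two equations encoded in $M_\ell(a_\ell,b_\ell)^{\top}=0$ gives $Pu=0\in\mathcal{C}^\infty(\mathbb{T}^n)$; thus $P$ is not globally hypoelliptic.

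The analytic substance is entirely carried by Theorem \ref{DC}, so I do not expect a genuine obstacle here. The points needing a little care are routine: that \eqref{Deltau} legitimately applies to distributional solutions; that a sequence of polynomial growth times a rapidly decreasing one is rapidly decreasing, together with the fact that a torus distribution is precisely a sequence of polynomial growth; and, in Case 1 of the converse, that the kernel vectors can be chosen bounded above and below and that no two of the frequencies $\pm\xi_\ell$ collide — both handled by the normalization and by passing to a subsequence.
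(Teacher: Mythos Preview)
Your proposal is correct and follows essentially the same route as the paper: the forward direction is identical (use \eqref{DC_condition} to invert $\Delta_\xi$ for large $\|\xi\|$ and deduce rapid decay of $\widehat u$ from that of $\widehat f$), and the converse proceeds, as in the paper, by invoking the constructions from the proof of Theorem~\ref{DC}. Your treatment is in fact more complete than the paper's terse corollary proof: in Case~1 the paper merely asserts that Theorem~\ref{DC} supplies a non-smooth distributional solution, whereas you actually build one via the kernel vectors of the singular matrices $M_\ell$, which is the correct way to fill that gap.
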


\begin{proof}
Let $u \in \mathscr{D}'(\mathbb{T}^n)$ be a solution to $Pu = f \in \mathcal{C}^\infty (\mathbb{T}^n)$. Since $P$ is solvable, the condition \eqref{DC_condition} holds. Therefore, there exists $\gamma > 0$ such that
\begin{equation*}
	\|\xi\| \geqslant \gamma \implies \widehat{u}(\xi) = \dfrac{(\overline{\sigma_L(-\xi)}-\bar A)\widehat{f}(\xi) + B\overline{\widehat{f}(-\xi)}}{\Delta_\xi}.
\end{equation*} 

Consequently, the rapid decay of the sequence $(\widehat{u}(\xi))$ is a consequence of the rapid decay of $(\widehat{f}(\xi))$. This implies that $u \in \mathcal{C}^\infty (\mathbb{T}^n)$ and $P$ is globally hypoelliptic.
	
On the other hand, assume that $P$ is not solvable. It follows from Theorem \ref{DC} that there exists $f\in \mathcal{C}^\infty(\mathbb{T}^n)$ and $u\in\mathscr{D}'(\mathbb{T}^n)\setminus \mathcal{C}^\infty(\mathbb{T}^n)$ such that $Pu=f$. Therefore, $P$ is not globally hypoelliptic.
\end{proof}

\begin{theorem}\label{elliptic}
	If $L$ is an elliptic differential operator, then  $P$ is solvable and globally hypoelliptic.
\end{theorem}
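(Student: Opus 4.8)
The plan is to derive the conclusion directly from Theorem~\ref{DC} and Corollary~\ref{GH}: it suffices to check that the Diophantine condition \eqref{DC_condition} holds, and in fact I will establish the stronger fact that $|\Delta_\xi|\to\infty$ as $\|\xi\|\to\infty$.

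First I would record the standard consequence of ellipticity. Write $\sigma_L(\xi)=p_m(\xi)+r(\xi)$, where $p_m(\xi)=\sum_{|\alpha|=m}i^{m}c_\alpha\xi^\alpha$ is the principal symbol and $r(\xi)=\sum_{0<|\alpha|<m}i^{|\alpha|}c_\alpha\xi^\alpha$ collects the lower-order terms. Ellipticity of $L$ means $p_m(\xi)\neq 0$ for every $\xi\in\mathbb{R}^n\setminus\{0\}$; since $p_m$ is homogeneous of degree $m$ and the unit sphere of $\mathbb{R}^n$ is compact, there is $c>0$ with $|p_m(\xi)|\geqslant c\|\xi\|^m$ for all $\xi\in\mathbb{R}^n$, while clearly $|r(\xi)|\leqslant C\|\xi\|^{m-1}$ for $\|\xi\|\geqslant 1$. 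Hence there is $R_0>0$ such that
$$|\sigma_L(\xi)|\geqslant |p_m(\xi)|-|r(\xi)|\geqslant \tfrac{c}{2}\|\xi\|^m,\qquad \|\xi\|\geqslant R_0,$$
and, because $|p_m(-\xi)|=|p_m(\xi)|$, the same estimate holds with $\xi$ replaced by $-\xi$.

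Next I would substitute this into the definition \eqref{Delta_xi} of $\Delta_\xi$. For $\|\xi\|\geqslant R_0$ one gets $|\sigma_L(\xi)-A|\geqslant \tfrac{c}{2}\|\xi\|^m-|A|$ and $|\overline{\sigma_L(-\xi)}-\bar A|=|\sigma_L(-\xi)-A|\geqslant \tfrac{c}{2}\|\xi\|^m-|A|$, so
$$|\Delta_\xi|\geqslant\big(\tfrac{c}{2}\|\xi\|^m-|A|\big)^2-|B|^2,$$
which tends to $\infty$ as $\|\xi\|\to\infty$. In particular there exists $\gamma\geqslant 1$ with $|\Delta_\xi|\geqslant 1$ whenever $\|\xi\|\geqslant\gamma$; since $\|\xi\|\geqslant\gamma\geqslant 1$ forces $\|\xi\|^{-\gamma}\leqslant 1$, condition \eqref{DC_condition} holds. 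By Theorem~\ref{DC}, $P$ is solvable, and by Corollary~\ref{GH} it is globally hypoelliptic.

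I do not expect a genuine obstacle here: the only step using ellipticity in an essential way is the passage from the pointwise non-vanishing of $p_m$ on $\mathbb{R}^n\setminus\{0\}$ to the uniform bound $|p_m(\xi)|\geqslant c\|\xi\|^m$, which is the usual homogeneity-plus-compactness argument; absorbing the lower-order remainder $r(\xi)$ and then reading off \eqref{DC_condition} are routine.
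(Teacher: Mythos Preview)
Your proof is correct and follows essentially the same line as the paper: both use the ellipticity of $L$ to obtain a lower bound $|\sigma_L(\pm\xi)|\gtrsim\|\xi\|^m$, deduce $|\Delta_\xi|\to\infty$, and conclude solvability (hence global hypoellipticity). The only cosmetic differences are that the paper first absorbs $A$ into $L$ (noting $L-A$ is still elliptic) and then, rather than citing Theorem~\ref{DC}, repeats its sufficiency argument directly; your version handles $A$ explicitly and invokes Theorem~\ref{DC} and Corollary~\ref{GH}, which is arguably tidier.
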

\begin{proof}
Given that $L-A$ is elliptic for any $A\in \mathbb{C}$, we can assume, without loss of generality, that $P$ is of the form $Pu=Lu-B\bar u$, with $B\in\mathbb{C}$.

Let $m$ be the order of $L$. Due to the ellipticity of $L$, there are positive constants $R_0$ and $M$ such that
\begin{equation*} 
	\|\xi\|\geqslant R_0\ \Rightarrow\ |\sigma_{L}(\xi)|\geqslant M\|\xi\|^m.
\end{equation*} 

For $\|\xi\|\geqslant R_0$, we have
\begin{align*}
	|\Delta_{\xi}| \ & = |\sigma_{L}(\xi)\cdot\overline{\sigma_{L}(-\xi)}-|B|^2|  \geqslant M^2\|\xi\|^{2m}-|B|^2.
\end{align*}

Choosing $R\geqslant R_0$ sufficiently large so that $M^2R^2>|B|^2+1$, we ensure that $\|\xi\|\geqslant R$ implies $|\Delta_{\xi}|>1$. 

Note that the set $\Lambda=\{\xi\in\mathbb{Z}^{n}: \|\xi\|<R\}$ is finite, and, in particular, $\Lambda_0=\{\xi\in\mathbb{Z}^n: \Delta_\xi=0\}\subset\Lambda$ is also finite. 

Let $u\in\mathscr{D}'(\mathbb{T}^n)$ be a solution to $Pu=f$, and suppose $\widehat{f}(\xi)=0$ for all $\xi\in\Lambda_0$. In this case, we have
\begin{equation*} 
	\widehat{u}(\xi)=\dfrac{\overline{\sigma_L(-\xi)}\widehat{f}(\xi)+B\overline{\widehat{f}(-\xi)}}{\Delta_\xi}, \ \xi\in\Lambda_0.
\end{equation*} 

Therefore, the sequence $(\widehat{u}(\xi))$ decays rapidly, resulting in $u\in \mathcal{C}^\infty(\mathbb{T}^n)$. It is crucial to note that we have only finitely many compatibility conditions over $f\in \mathcal{C}^\infty(\mathbb{T}^n)$ for the equation $Pu=f$ to admit a solution $u\in \mathcal{C}^\infty(\mathbb{T}^n)$. This finiteness arises from the fact that $\Lambda_0$ is finite. Consequently, $P$ is solvable. The global hypoellipticity is a direct consequence of Corollary \ref{GH}.
\end{proof}

\begin{example}[Laplace operator]
If $L = \sum_{j=1}^{n} {\partial^2}/{\partial x_j^2}$ then the operator $P$ given by $Pu=Lu-Au-B\bar u$, with $A,B\in\mathbb{C}$ is solvable and globally hypoelliptic.
\end{example}

\section{Applications}

In this section, we present results regarding the solvability and global hypoellipticity of some classical examples of constant-coefficients operators.

\begin{theorem}[Heat Operator]\label{heat}
	If $L={\partial}/{\partial t}-\eta^2\sum_{j=1}^{n}{\partial^2}/{\partial x_j^2}$, where $\eta>0$, then $P$ is solvable and globally hypoelliptic.
\end{theorem}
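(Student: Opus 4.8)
The plan is to verify the Diophantine condition \eqref{DC_condition} of Theorem \ref{DC} directly. The heat operator is \emph{not} elliptic --- its principal symbol $\eta^2\|\zeta\|^2$ vanishes whenever $\zeta=0$ and $\tau\neq 0$ --- so Theorem \ref{elliptic} does not apply. Write the points of the torus as $(t,x)$ with Fourier dual variable $(\tau,\zeta)\in\mathbb{Z}\times\mathbb{Z}^n$. First I would compute $\sigma_L(\tau,\zeta)=i\tau+\eta^2\|\zeta\|^2$ and record the key symmetry $\overline{\sigma_L(-\tau,-\zeta)}=i\tau+\eta^2\|\zeta\|^2=\sigma_L(\tau,\zeta)$. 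Setting $w=\sigma_L(\tau,\zeta)$ and writing $A=a+ib$ with $a,b\in\mathbb{R}$, formula \eqref{Delta_xi} collapses to $\Delta_\xi=(w-A)(w-\bar A)-|B|^2=w^2-2aw+|A|^2-|B|^2$. Expanding and abbreviating $s=\eta^2\|\zeta\|^2-a$ and $C=b^2-|B|^2$ gives the clean expression
\[
	\Delta_\xi=\bigl(s^2-\tau^2+C\bigr)+2i\tau s .
\]

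Next I would estimate $|\Delta_\xi|$ from below. Using the identity $(s^2-\tau^2)^2+4s^2\tau^2=(s^2+\tau^2)^2$,
\[
	|\Delta_\xi|^2=(s^2-\tau^2+C)^2+4s^2\tau^2=(s^2+\tau^2)^2+2C(s^2-\tau^2)+C^2\geq\bigl(s^2+\tau^2-|C|\bigr)^2,
\]
so that $|\Delta_\xi|\geq\bigl|s^2+\tau^2-|C|\bigr|$. It remains to note that $s^2+\tau^2\to\infty$ as $\|(\tau,\zeta)\|\to\infty$: if $\|\zeta\|\to\infty$ then $s=\eta^2\|\zeta\|^2-a\to\infty$, while if $\|\zeta\|$ stays bounded then $|\tau|\to\infty$. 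Hence there is $R_0>0$ with $\|(\tau,\zeta)\|\geq R_0\Rightarrow s^2+\tau^2\geq|C|+1$, which forces $|\Delta_\xi|\geq 1$ there. Taking $\gamma=\max\{R_0,1\}$, for $\|\xi\|\geq\gamma$ we obtain $|\Delta_\xi|\geq 1\geq\|\xi\|^{-\gamma}$, so \eqref{DC_condition} holds; Theorem \ref{DC} yields solvability and Corollary \ref{GH} yields global hypoellipticity.

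The argument has no genuine obstacle; the two points requiring a little care are (i) spotting the conjugation symmetry $\overline{\sigma_L(-\xi)}=\sigma_L(\xi)$, which is what reduces $\Delta_\xi$ to $(w-A)(w-\bar A)-|B|^2$ and makes the modulus computable, and (ii) the short dichotomy ($\|\zeta\|$ large versus $|\tau|$ large) used to see that the lower bound $|\Delta_\xi|\geq\bigl|s^2+\tau^2-|C|\bigr|$ blows up along every sequence with $\|\xi\|\to\infty$. In fact this same estimate shows $|\Delta_\xi|\gtrsim\|\xi\|^2$ for $\|\xi\|$ large, far more than \eqref{DC_condition} demands.
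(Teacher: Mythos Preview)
Your proof is correct and follows the same overall strategy as the paper --- compute $\Delta_\xi$ using the symmetry $\overline{\sigma_L(-\xi)}=\sigma_L(\xi)$, then bound it from below to verify \eqref{DC_condition} --- but the execution is different and, in fact, tidier. The paper separates into the cases $\tau=0$, $\tau\neq 0$ with $\eta^2\|\xi\|^2\neq\operatorname{Re}(A)$, and $\eta^2\|\xi\|^2=\operatorname{Re}(A)$, bounding the real or imaginary part of $\Delta_\xi$ in each regime and then stitching together constants $\gamma_1,\gamma_2,\gamma_3$. Your route avoids all of this by exploiting the algebraic identity $(s^2-\tau^2)^2+4s^2\tau^2=(s^2+\tau^2)^2$ to obtain the single uniform estimate $|\Delta_\xi|\geq\bigl|s^2+\tau^2-|C|\bigr|$, after which a short compactness/dichotomy argument finishes. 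The inequality $|\Delta_\xi|^2\geq(s^2+\tau^2-|C|)^2$ that you use is easily checked (it reduces to $C(s^2-\tau^2)+|C|(s^2+\tau^2)\geq 0$, which is $2|C|s^2\geq0$ or $2|C|\tau^2\geq0$ according to the sign of $C$). What the paper's case split buys is that each step is completely elementary; what your identity buys is a one-line lower bound with no case analysis and the sharper observation that $|\Delta_\xi|$ actually grows at least like $\|\xi\|^2$.
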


\begin{proof}
To prove that the discriminant associated with $P$ satisfies the condition \eqref{DC_condition} of Theorem \ref{DC}, we begin by noting that 
$$\sigma_L(\tau,\xi) = i\tau + \eta^2\|\xi\|^2 = \overline{\sigma_L(-\tau,-\xi)},$$ 
for all $(\tau,\xi)\in \mathbb{Z}^{n+1}.$ By \eqref{Delta_xi} we obtain
\begin{equation*}
		\Delta_{\tau,\xi} 
		= \eta^4\|\xi\|^4 - 2\text{Re}(A) \eta^2\|\xi\|^2 - \tau^2 + |A|^2 - |B|^2 + 2i\tau (\eta^2\|\xi\|^2-\text{Re}(A)).
\end{equation*}

For $\tau=0$, there exists $\gamma_1 > 0$ such that
\begin{align}
	\|\xi\| \geqslant \gamma_1 \implies  |\Delta_{0,\xi}| \geqslant & |\text{Re}(\Delta_{0,\xi})| \label{tau=0} \\
	= & \Big| \eta^4|\xi|^4 - 2\text{Re}(A)\eta^2|\xi|^2 + |A|^2 - |B|^2 \Big| > 1. \nonumber
\end{align}

On the other hand, when $\tau\neq 0$, we find
\begin{equation*} 
	|\Delta_{\tau,\xi}| \geqslant |\text{Im}(\Delta_{\tau,\xi})| =  2|\tau| \big| \eta^2\|\xi\|^2-\text{Re}(A) \big|.
\end{equation*} 
	
Note that the condition $\eta^2|\xi|^2 - \text{Re}(A) = 0$ holds for at most a finite number of indices $(\tau,\xi) \in \mathbb{Z}^{n+1}$. Consequently, there exists $\gamma_2 > 0$ such that	
$ \big|\eta^2\|\xi\|^2 - \text{Re}(A)\big| > 1/2,$ whenever $\|\xi\| \geqslant \gamma_2.$ 
Thus, 
\begin{equation}
	\|\xi\| \geqslant \gamma_2 \implies |\Delta_{\tau,\xi}| > 1.  \label{tau_neq_0}
\end{equation}

Now, let us consider the remaining case where $(\tau,\xi) \in \mathbb{Z}^{n+1}$ satisfies $\eta^2\|\xi\|^2 = \text{Re}(A)$. In this scenario, we have
\begin{equation*}
	|\Delta_{\tau,\xi}|\ \geqslant\ |\text{Re}(\Delta_{\tau,\xi})| 	= \big|\tau^2+\text{Re}(A)^2+|B|^2-|A|^2\big|, 
\end{equation*}
indicating that $|\text{Re}(\Delta_{\tau,\xi})| = 0$ if and only if
\begin{equation*}
	\tau = \pm\sqrt{-\text{Re}(A)^2 + |A|^2 - |B|^2}.
\end{equation*} 

Therefore, $|\Delta_{\tau,\xi}|$ vanishes for at most two integers $\tau \in \mathbb{Z}$.  So, let $\gamma_3 > 0$ be such that
\begin{equation}
	|\tau| \geqslant \gamma_3 \implies  |\Delta_{\tau,\xi}| \geqslant \big|\tau^2 + \text{Re}(A)^2 + |B|^2 - |A|^2\big| > 1. \label{tau_remaining}
\end{equation}

Define $\gamma \doteq 2\max\{\gamma_1,\gamma_2,\gamma_3\}$. Consequently, when $\|\xi\|+|\tau| \geqslant \gamma$, we can ensure that either $\|\xi\| \geqslant \gamma/2$ or $|\tau| \geqslant \gamma/2$.

For $\|\xi\| \geqslant \gamma/2$, if $\tau=0$, then $|\Delta_{0,\xi}| \geq |\text{Re}(\Delta_{0,\xi})|>1$ since $\gamma \geq \gamma_1$. Similarly, for $\tau \neq 0$, we have $|\Delta_{\tau,\xi}| \geq |\text{Im}(\Delta_{\tau,\xi})|>1$ because $\gamma/2 \geq \gamma_2$.

Now, for $|\tau|\geqslant\gamma/2$, if $\eta\|\xi\|^2-\text{Re}(A)=0$, then $|\Delta_{\tau,\xi}| \geqslant |\text{Re} (\Delta_{\tau,\xi})|>1$ since $\gamma/2\geqslant\gamma_3$. In the case where $\eta\|\xi\|^2-\text{Re}(A)\neq 0$, we have $\gamma/2>0$, implying $|\tau|\geqslant 1$. Consequently, 
\begin{equation*}
	|\Delta_{\tau,\xi}|\geq|\text{Im}(\Delta_{\tau,\xi})|\geqslant 2|\eta^2\|\xi\|^2-\text{Re}(A)|\geqslant C>0,
\end{equation*}
where $C=\inf\{|\eta^2\|\xi\|^2-\text{Re}(A)|:\xi\in\mathbb{Z}^n \mbox{ and } \eta^2\|\xi\|^2-\text{Re}(A)\neq 0\}.$
Notice that $C>0$ because the set $\{\xi\in\mathbb{Z}^n:\eta^2\|\xi\|^2-\text{Re}(A)=0\}$ is closed.

\medskip
Then, if $\|\xi\|+|\tau|\geqslant\gamma$, $|\Delta_{\tau,\xi}|\geqslant C>0$, implying that \eqref{DC_condition} holds. This, in turn, leads to the conclusion from Theorem \ref{DC} that $P$ is solvable. The global hypoellipticity follows from Corollary \ref{GH}.
\end{proof}

The hypoellipticity of the heat operator is a well-established result in the literature. The preceding theorem establishes that the global hypoellipticity of the heat operator remains stable under both zero-order perturbations and zero-order conjugate perturbations.

\begin{theorem}[Wave operator]\label{wave}
	If $L={\partial^2}/{\partial t^2}-\eta^2\sum_{j=1}^n{\partial^2}/{\partial x_j^2}$, where $\eta>0$, then $P$ is solvable (and globally hypoelliptic) if and only if one of the following conditions holds:
	\begin{enumerate}
		\item[(i)] $|B|<|\text{Im}(A)|$;
		\item[(ii)] $|A|=|B|$, $\text{Re}(A)=0$ and $\eta$ is an irracional non-Liouville number;
		\item[(iii)] \eqref{DC_condition} holds.
	\end{enumerate}
\end{theorem}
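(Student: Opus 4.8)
The plan is to route everything through Theorem~\ref{DC} and Corollary~\ref{GH}, so that both solvability and global hypoellipticity of $P$ become equivalent to the single Diophantine condition \eqref{DC_condition} on the discriminants $\Delta_{\tau,\xi}$, $(\tau,\xi)\in\mathbb{Z}^{n+1}$. The first step is to compute those discriminants. The symbol of the wave operator is the real polynomial $\sigma_L(\tau,\xi)=\eta^2\|\xi\|^2-\tau^2$, and it satisfies $\sigma_L(-\tau,-\xi)=\sigma_L(\tau,\xi)$; hence, writing $s=s_{\tau,\xi}=\eta^2\|\xi\|^2-\tau^2\in\mathbb{R}$, formula \eqref{Delta_xi} gives
\[
\Delta_{\tau,\xi}=(s-A)(s-\bar A)-|B|^2=(s-\text{Re}(A))^2+\text{Im}(A)^2-|B|^2 ,
\]
a real quantity depending on $(\tau,\xi)$ only through $s_{\tau,\xi}$. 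Since (iii) is literally condition \eqref{DC_condition}, and Theorem~\ref{DC} already yields ``$P$ solvable $\Leftrightarrow$ \eqref{DC_condition}'' (global hypoellipticity then following from Corollary~\ref{GH}), what remains is to prove that each of (i) and (ii) implies \eqref{DC_condition}; the ``only if'' direction of the stated equivalence is then immediate.

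Condition (i) is the soft case. If $|B|<|\text{Im}(A)|$, the identity above yields $\Delta_{\tau,\xi}\geqslant\text{Im}(A)^2-|B|^2=:c_0>0$ for \emph{every} $(\tau,\xi)\in\mathbb{Z}^{n+1}$. A uniform positive lower bound forces \eqref{DC_condition}: with $\gamma:=\max\{1,c_0^{-1}\}$ one has $\|(\tau,\xi)\|^{-\gamma}\leqslant\|(\tau,\xi)\|^{-1}\leqslant c_0\leqslant|\Delta_{\tau,\xi}|$ whenever $\|(\tau,\xi)\|\geqslant\gamma$, and Theorem~\ref{DC} applies.

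Condition (ii) carries all the weight. Here $\text{Re}(A)=0$ together with $|A|=|B|$ reduces the identity to $\Delta_{\tau,\xi}=s_{\tau,\xi}^{2}=(\eta^2\|\xi\|^2-\tau^2)^2$, so \eqref{DC_condition} becomes equivalent to the existence of $\kappa>0$ with
\[
\bigl|\eta^2\|\xi\|^2-\tau^2\bigr|\ \geqslant\ \|(\tau,\xi)\|^{-\kappa}\qquad\text{for all sufficiently large }\|(\tau,\xi)\| .
\]
Writing $\eta^2\|\xi\|^2-\tau^2=\bigl(\eta\|\xi\|-\tau\bigr)\bigl(\eta\|\xi\|+\tau\bigr)$, one only has to control the frequencies with $\tau$ close to $\pm\eta\|\xi\|$, since otherwise $\bigl|\eta^2\|\xi\|^2-\tau^2\bigr|\gtrsim\|(\tau,\xi)\|^{2}$ trivially; in that regime $\bigl|\eta\|\xi\|+\tau\bigr|$ is comparable to $\eta\|\xi\|$ and $\|(\tau,\xi)\|$ is comparable to $\|\xi\|$, so everything reduces to a lower bound for $\bigl|\eta\|\xi\|-\tau\bigr|$ by a negative power of $\|\xi\|$. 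When $n=1$ this is exactly the non-Liouville property of $\eta$: here $\|\xi\|^2=\xi^2$, so $\eta^2\|\xi\|^2-\tau^2=(\eta\xi-\tau)(\eta\xi+\tau)$ and $|\eta\xi-\tau|\geqslant c\,|\xi|^{-\mu}$ for some $c>0$ and finite $\mu$, whence $|\Delta_{\tau,\xi}|\geqslant c'\|(\tau,\xi)\|^{2-2\mu}$ and \eqref{DC_condition} holds with any $\gamma>2(\mu-1)$. In general $\|\xi\|^2\in\mathbb{Z}_{\geqslant0}$ but $\|\xi\|$ need not be an integer, and one works instead with $\bigl|\eta^2\|\xi\|^2-\tau^2\bigr|=\|\xi\|^2\,\bigl|\eta^2-\tau^2/\|\xi\|^2\bigr|$ and the corresponding Diophantine property of $\eta^2$; combined with the elementary observations that a rational $\eta$ produces an infinite sequence of vanishing discriminants and a Liouville $\eta$ produces discriminants decaying faster than every power of $\|(\tau,\xi)\|$, this pins down the equivalence. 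The main obstacle is precisely this Diophantine step inside (ii): obtaining a uniform polynomial lower bound for $\bigl|\eta^2\|\xi\|^2-\tau^2\bigr|$ from the non-Liouville hypothesis on $\eta$, that is, transferring the hypothesis to the full combination $\eta^2\|\xi\|^2-\tau^2$ and disposing of the borderline frequencies $\tau\approx\pm\eta\|\xi\|$.
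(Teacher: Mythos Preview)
Your strategy is exactly the paper's: reduce everything to \eqref{DC_condition} via Theorem~\ref{DC}, compute $\Delta_{\tau,\xi}=(s-\text{Re}(A))^2+\text{Im}(A)^2-|B|^2$ with $s=\eta^2\|\xi\|^2-\tau^2$, settle (i) by the uniform lower bound $\text{Im}(A)^2-|B|^2>0$, and treat (ii) through the factorization $s=(\eta\|\xi\|-\tau)(\eta\|\xi\|+\tau)$ together with the non-Liouville hypothesis. Where you split into regimes and estimate the large factor separately, the paper simply asserts $|\tau\pm\eta\|\xi\||\geqslant C(\|\xi\|+|\tau|)^{-\gamma_0}$ for all $(\tau,\xi)\in(\mathbb{Z}^n\setminus\{0\})\times\mathbb{Z}$ and multiplies the two bounds; the logical skeleton is identical.

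The obstacle you flag in (ii) for $n\geqslant2$ is genuine, and the paper does not address it --- it just states the inequality above without justification. The standard non-Liouville bound controls $|q\eta-p|$ only for \emph{integer} $q$, while $\|\xi\|$ is generally irrational. Your proposed shift to $\eta^2$ does not save the step, because $\eta$ non-Liouville does not force $\eta^2$ to be non-Liouville or even irrational: for $\eta=\sqrt{2}$ and $n=2$ one has $\eta^2\|\xi\|^2-\tau^2=2\|\xi\|^2-\tau^2=0$ along $\xi=(k,k)$, $\tau=2k$, so $\Delta_{\tau,\xi}=0$ infinitely often and \eqref{DC_condition} fails even though (ii) is satisfied. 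Thus for $n=1$ your sketch (and the paper's argument) is complete, but for $n\geqslant2$ the implication (ii)$\Rightarrow$\eqref{DC_condition} cannot be obtained from the hypothesis ``$\eta$ irrational non-Liouville'' alone; your instinct that this is where the difficulty lies is correct.
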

\begin{proof}
Let us prove that the discriminant $\Delta_{\tau,\xi}$ satisfies the condition \eqref{DC_condition} of Theorem \ref{DC}. First, observe that the symbol 
$$
\sigma_L(\tau,\xi)=\overline{\sigma_L(-\xi,-\tau)}=-\tau^2+\eta^2\|\xi\|^2,  
$$
for all $(\tau,\xi)\in \mathbb{Z}^{n+1}.$

It follows from \eqref{Delta_xi} that
\begin{equation*}
	\Delta_{\tau,\xi} = \Big(\tau^2-\eta^2\|\xi\|^2+\text{Re}(A)\Big)^2+\text{Im}(A)^2-|B|^2, \ (\tau,\xi)\in \mathbb{Z}^{n+1}.
\end{equation*}

Assuming condition (i) holds, we have
\begin{equation*}
	|\Delta_{\tau,\xi}|=\Delta_{\tau,\xi}\geqslant \text{Im}(A)^2-|B|^2>0,  
\end{equation*} 
for all $(\tau,\xi)\in\mathbb{Z}^{n+1}.$ Therefore, (DC) holds and $P$ is solvable.

\medskip
Now, assume that condition (ii) holds, then
\begin{equation*}
	|\Delta_{\tau,\xi}|=|-\tau^2+\eta^2\|\xi\|^2|^2=|\tau-\eta\|\xi\||^2\cdot|\tau+\eta\|\xi\||^2.
\end{equation*}

Given that $\eta$ is an irrational non-Liouville number, there exist positive constants $C$ and $\gamma_0$ such that
\begin{equation*}
	\big|\tau\pm\eta\|\xi\|\big|\geqslant C(\|\xi\|+|\tau|)^{-\gamma_0},
\end{equation*}
for all $(\tau,\xi)\in(\mathbb{Z}^n\setminus\{0\})\times\mathbb{Z}.$ Consequently, we obtain
\begin{equation*}
	|\Delta_{\tau,\xi}|\geqslant C^4(\|\xi\|+|\tau|)^{-\gamma_0}=C^4(\|\xi\|+|\tau|)^{\gamma_0}(\|\xi\|+|\tau|)^{-5\gamma_0}.
\end{equation*}

Take $\gamma_1>0$ such that $\|\xi\|+|\tau|\geqslant\gamma_1$ implies $C^4(\|\xi\|+|\tau|)^{\gamma_0}\geqslant 1$, and set $\gamma=\max\{\gamma_1,5\gamma_0\}$. Consequently,
\begin{equation*}
	\|\xi\|+|\tau|\geqslant\gamma\ \implies |\Delta_{\tau,\xi}|\geqslant (\|\xi\|+|\tau|)^{-\gamma}.
\end{equation*}

Hence, \eqref{DC_condition} holds, and $P$ is solvable. 

Lastly, if condition (iii) holds, then $P$ is solvable directly.

On the other hand, if none of the conditions (i)-(iii) holds, then, in particular, \eqref{DC_condition} does not hold. Consequently, according to Theorem \ref{DC}, $P$ is not solvable. Finally, the global hypoellipticity prevails, as indicated by Corollary \ref{GH}.
\end{proof}

Finally, we recover  the following result concerning complex vector fields from \cite{BDM}.

\begin{theorem}[Complex vector fields]
The operator $P:\mathcal{C}^\infty(\mathbb{T}^2)\to \mathcal{C}^\infty(\mathbb{T}^2)$  given by
$$
Pu = \frac{\partial u}{\partial t} + C\frac{\partial u}{\partial x} - Au - \bar{B}u,
$$
with $A,B,C \in \mathbb{C}$, is solvable if and only if one of the following situations occurs:
\begin{enumerate}
		\item[(i)] $|B|>|A|$;
		\item[(ii)] $\text{Im}(C)\neq 0$;
		\item[(iii)] $|B|<|A|$ and $\text{Re}(A)\neq 0$;
		\item[(iv)] The pair $(C,\sqrt{|A|^2-|B|^2})$ belongs to $\mathbb{R}^2$ and satisfies \eqref{DC_condition}.
\end{enumerate}
\end{theorem}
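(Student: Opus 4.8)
The plan is to apply Theorem~\ref{DC} to the operator \eqref{P} with $L = \partial/\partial t + C\,\partial/\partial x$, so that everything reduces to computing the discriminant $\Delta_{\tau,\xi}$ from \eqref{Delta_xi} (the frequency variable on $\mathbb{T}^2$ being $(\tau,\xi)\in\mathbb{Z}^2$) and deciding exactly when it satisfies \eqref{DC_condition}. Since $\sigma_L(\tau,\xi) = i(\tau + C\xi)$ and $\overline{\sigma_L(-\tau,-\xi)} = i(\tau + \bar C\xi)$, writing $p = \tau + \text{Re}(C)\xi$ and $q = \text{Im}(C)\xi$ and expanding \eqref{Delta_xi} gives, as in Theorems~\ref{heat} and~\ref{wave},
\begin{equation*}
	\Delta_{\tau,\xi} = \bigl(|A|^2 - |B|^2 - p^2 - q^2\bigr) - 2i\bigl(\text{Re}(A)\,p + \text{Im}(A)\,q\bigr).
\end{equation*}
All four alternatives are then read off from this identity by elementary estimates.

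For sufficiency I would argue case by case. If (i) holds then $\text{Re}(\Delta_{\tau,\xi}) \leqslant |A|^2 - |B|^2 < 0$ for every $(\tau,\xi)$, so $|\Delta_{\tau,\xi}| \geqslant |B|^2 - |A|^2 > 0$ uniformly and \eqref{DC_condition} is immediate. If (ii) holds then $p^2 + q^2 = (\tau + \text{Re}(C)\xi)^2 + \text{Im}(C)^2\xi^2$ is a positive-definite quadratic form in $(\tau,\xi)$ (equivalently, $L$ is elliptic, and one may instead invoke Theorem~\ref{elliptic}), hence $\text{Re}(\Delta_{\tau,\xi}) \to -\infty$ and $|\Delta_{\tau,\xi}| \geqslant 1$ outside a finite set of frequencies, giving \eqref{DC_condition}. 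If (iii) holds we may assume $\text{Im}(C) = 0$ — otherwise (ii) already applies — so $q = 0$ and $|\Delta_{\tau,\xi}|^2 = h(p)$ with $h(p) = (|A|^2 - |B|^2 - p^2)^2 + 4\,\text{Re}(A)^2 p^2$; the function $h$ is continuous, tends to $+\infty$, and vanishes only where simultaneously $p = 0$ and $p^2 = |A|^2 - |B|^2$, impossible since $|A|^2 - |B|^2 > 0$, so $h$ has a strictly positive minimum and $|\Delta_{\tau,\xi}|$ is bounded below by a positive constant. Finally, (iv) is by definition the statement that \eqref{DC_condition} holds, so Theorem~\ref{DC} applies directly. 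In every case Theorem~\ref{DC} yields solvability and Corollary~\ref{GH} yields global hypoellipticity.

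For necessity, suppose none of (i)--(iv) holds. Negating (i) and (ii) forces $|B|\leqslant|A|$ and $\text{Im}(C)=0$, so $C$ is real, $q=0$, and $\mu = \sqrt{|A|^2-|B|^2}$ is a nonnegative real number; negating (iii) then forces $|A| = |B|$ (so $\mu = 0$) or $\text{Re}(A) = 0$. In the first subcase $\Delta_{\tau,\xi} = -p\,(p + 2i\,\text{Re}(A))$, and in the second $\Delta_{\tau,\xi} = \mu^2 - p^2 = (\mu - p)(\mu + p)$; in both, one factor is bounded away from $0$ and bounded polynomially from above, so $|\Delta_{\tau,\xi}|$ is comparable to the distance from $p = \tau + C\xi$ to the finite set $\{0,\pm\mu\}$. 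Hence \eqref{DC_condition} for $\Delta$ is precisely the simultaneous Diophantine condition on the pair $(C,\mu)$ that constitutes (iv); since (iv) fails, \eqref{DC_condition} fails, and Theorem~\ref{DC} gives that $P$ is not solvable (nor globally hypoelliptic, by Corollary~\ref{GH}).

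I expect the main difficulty to be organizational rather than analytic: one must check that the alternatives (i)--(iv) are arranged so that negating (i)--(iii) really lands one in the single regime ($C$ real, $|B| \leqslant |A|$, and $|A| = |B|$ or $\text{Re}(A) = 0$) in which only the pair condition remains, and then verify that the factorizations of $\Delta_{\tau,\xi}$ in that regime translate faithfully — with the correct $\gamma$-exponents — into the Diophantine statement of (iv), handling the borderline $\mu = 0$ (where $|A| = |B|$) and the branch $\text{Re}(A) = 0$, $|B| < |A|$ on the same footing, so that the pair $\bigl(C, \sqrt{|A|^2-|B|^2}\,\bigr)$ is always the right object.
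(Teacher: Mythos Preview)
The paper does not actually prove this theorem: it is stated without proof as a result recovered from \cite{BDM}, so there is no in-paper argument to compare against. Your route --- compute $\Delta_{\tau,\xi}$ explicitly and feed it into Theorem~\ref{DC} --- is exactly the intended application of the paper's framework, and your derivation of
\[
\Delta_{\tau,\xi} = \bigl(|A|^2 - |B|^2 - p^2 - q^2\bigr) - 2i\bigl(\mathrm{Re}(A)\,p + \mathrm{Im}(A)\,q\bigr),\qquad p=\tau+\mathrm{Re}(C)\xi,\ q=\mathrm{Im}(C)\xi,
\]
is correct. The sufficiency arguments for (i)--(iii) are clean; for (ii) invoking Theorem~\ref{elliptic} is indeed the shortest path.

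One point deserves more care. You treat (iv) in the sufficiency direction by saying it ``is by definition the statement that \eqref{DC_condition} holds,'' but (iv) as phrased is a Diophantine condition on the \emph{pair} $(C,\mu)$ with $\mu=\sqrt{|A|^2-|B|^2}$, namely a lower bound on $|\tau + C\xi \pm \mu|$, whereas \eqref{DC_condition} is a lower bound on $|\Delta_{\tau,\xi}|$. You eventually argue (in the necessity paragraph) that the two are equivalent via the factorizations $\Delta=-p(p+2i\,\mathrm{Re}(A))$ and $\Delta=(\mu-p)(\mu+p)$, and you correctly flag this as the main thing to verify; just make sure that in the final write-up the equivalence runs \emph{both} ways with explicit control of exponents, and that the subcase $|A|=|B|$ with $\mathrm{Re}(A)\neq 0$ (where neither (iii) nor the ``$\mathrm{Re}(A)=0$'' branch applies, and $|\Delta|\asymp |p|$ rather than $|p|^2$) is handled alongside the others. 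With that tightened, the argument is complete.
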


\bibliographystyle{plain}
\bibliography{references}

\end{document}